\begin{document}
\baselineskip=22pt

\theoremstyle{plain}
\newtheorem{thm}{Theorem}[section]
\newtheorem{pro}[thm]{Proposition}
\newtheorem{cor}[thm]{Corollary}
\newtheorem{con}[thm]{Conjecture}
\newtheorem{lem}[thm]{Lemma}

\theoremstyle{definition}
\newtheorem{prob}[thm]{Problem}
\newtheorem{rem}[thm]{Remark}
\newtheorem{example}[thm]{Example}

\newcommand{\C}{\textnormal{Con}}
\newcommand{\D}{\textnormal{diag}}
\newcommand{\id}{\textnormal{id}}
\newcommand{\M}{\textnormal{Move}}
\newcommand{\Cy}{\textnormal{Cy}}
\newcommand{\G}{\textnormal{Gy}}

\title{\bf On the eigenvalues of certain Cayley graphs and arrangement graphs}

\author{\large Bai Fan Chen$^{\,\rm 1}$ \quad \quad  Ebrahim Ghorbani$^{\,\rm 2,3}$  \quad \quad Kok Bin Wong$^{\,\rm 1}$\\[.4cm]
{\sl $^{\rm 1}$Institute of Mathematical Sciences, University of Malaya,}\\
{\sl 50603 Kuala Lumpur, Malaysia}\\[0.3cm]
{\sl $^{\rm 2}$Department of Mathematics, K.N. Toosi University of Technology,}\\
{\sl P.O. Box 16315-1618, Tehran, Iran}\\[0.3cm]
{\sl $^{\rm 3}$School of Mathematics, Institute for Research in Fundamental
Sciences (IPM),}\\{\sl P.O. Box
19395-5746, Tehran, Iran }
\\[0.5cm]{
$\mathsf{tufofo1120@gmail.com}$ \quad\quad  $\mathsf{e\_ghorbani@ipm.ir}$ \quad\quad  $\mathsf{kbwong@um.edu.my}$}}

 \maketitle

\begin{abstract}\noindent
In this paper, we show that the eigenvalues of certain classes of Cayley graphs are integers.
 The $(n,k,r)$-arrangement graph  $A(n,k,r)$ is a graph with all the $k$-permutations of an $n$-element set as vertices where two $k$-permutations are adjacent if they differ in exactly $r$ positions.
 We establish a relation between the eigenvalues of the arrangement graphs and the eigenvalues of certain Cayley graphs.
 As a result, the conjecture on integrality of eigenvalues of $A(n,k,1)$ follows.

 \vspace{5mm}
\noindent {\it Keywords:}  Arrangement graph,  Cayley graph, Symmetric group, Spectrum integrality  \\[.1cm]
\noindent {\it AMS Mathematics Subject Classification\,(2010):}  05C50, 20C10,  05A05
\end{abstract}

\section{Introduction}

Let $\Gamma$ be a simple graph with vertex set $\nu$. The {\em adjacency matrix}  of   $\Gamma$ is a $\nu\times\nu$ matrix where its rows and
columns  indexed by  the vertex set of $\Gamma$  and its $(u, v)$-entry  is $1$ if the vertices $u$ and
$v$ are adjacent and $0$ otherwise. By {\em eigenvalues} of $\Gamma$ we mean the eigenvalues of its adjacency matrix.
A graph  is said to be \emph{integral} if all its eigenvalues are integers. All graphs considered are finite (multi-)graphs without self-loops.

\subsection{Cayley graphs}

Let $G$ be a finite group and $S$ be an inverse closed subset of $G$. The \emph{Cayley graph} $\Gamma(G,S)$ is the graph which has the elements of $G$ as its vertices and two vertices $u, v \in G$ are joined by an edge if and only if $v=su$ for some $s\in S$.

 A Cayley graph $\Gamma(G,S)$ is said to be {\em normal} if $S$ is closed under conjugation. It is well known that the eigenvalues of a normal Cayley graph $\Gamma(G,S)$ can be expressed in terms of the irreducible characters of $G$.

\begin{thm}[\cite{Babai, DS, Lub, Ram}]\label{cayley}
The eigenvalues of a normal Cayley graph $\Gamma(G,S)$ are given by
\begin{eqnarray}
\eta_{\chi} & = & \frac{1}{\chi(1)} \sum_{s \in S} \chi(s),\notag
\end{eqnarray}
where $\chi$ ranges over all the irreducible characters of $G$. Moreover, the multiplicity of $\eta_{\chi}$ is $\chi(1)^{2}$.
\end{thm}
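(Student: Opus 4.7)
The plan is to realize the adjacency operator of $\Gamma(G,S)$ as the operator ``multiplication by $T_S := \sum_{s \in S} s$'' on the group algebra $\mathbb{C}[G]$, viewed via the regular representation. Indexing the natural basis of $\mathbb{C}[G]$ by the elements of $G$, the matrix of left multiplication by $T_S$ has a $1$ in position $(v,u)$ exactly when $v = s u$ for some $s \in S$, which is precisely the adjacency matrix of $\Gamma(G,S)$. Hence the spectrum of the graph equals the spectrum of $T_S$ acting on the regular representation.

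Next, I would use the hypothesis that $S$ is closed under conjugation to conclude that $T_S$ is a sum of conjugacy class sums, and therefore lies in the center $Z(\mathbb{C}[G])$. Invoking the Wedderburn decomposition
\[
\mathbb{C}[G] \;\cong\; \bigoplus_{\chi} M_{\chi(1)}(\mathbb{C}),
\]
where $\chi$ ranges over the irreducible characters of $G$, the center corresponds to tuples of scalar matrices. Equivalently, by Schur's lemma, $T_S$ acts on each irreducible representation $\rho_\chi$ as a scalar $\eta_\chi \cdot I_{\chi(1)}$. Taking the trace on both sides gives $\chi(1)\,\eta_\chi = \mathrm{tr}\bigl(\rho_\chi(T_S)\bigr) = \sum_{s \in S}\chi(s)$, which yields the displayed formula for $\eta_\chi$.

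For the multiplicities, I would appeal to the standard decomposition of the regular representation $\mathbb{C}[G] \cong \bigoplus_\chi \chi(1)\,\rho_\chi$, in which each irreducible $\rho_\chi$ of dimension $\chi(1)$ occurs with multiplicity $\chi(1)$. Since $T_S$ acts on every copy of $\rho_\chi$ as $\eta_\chi \cdot I_{\chi(1)}$, the eigenvalue $\eta_\chi$ is contributed $\chi(1)\cdot \chi(1) = \chi(1)^2$ times in total, as claimed. The main obstacle is not mathematical depth (the argument is a packaging of Wedderburn plus Schur) but rather keeping the bookkeeping straight: one must fix a consistent left/right multiplication convention so that $T_S$ really does represent the adjacency matrix, and one should read the multiplicity assertion per irreducible character $\chi$, since distinct characters can in principle produce numerically equal $\eta_\chi$.
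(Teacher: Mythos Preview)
Your argument is correct and is in fact the standard proof of this result: identify the adjacency operator with left multiplication by $T_S=\sum_{s\in S}s$ on $\mathbb{C}[G]$, observe that $T_S$ is central because $S$ is conjugation-closed, apply Schur's lemma to see that $T_S$ acts as a scalar $\eta_\chi$ on each irreducible $\rho_\chi$, compute that scalar by taking traces, and read off the multiplicities from the decomposition of the regular representation. Your caveat about left/right conventions is well placed; note that the paper's own convention in Section~2.1 writes $\bigl(\sum_{s\in S}s\bigr)g_i=\sum_j b_{ij}g_j$, which is the transpose of the matrix you describe, but since $S$ is inverse-closed the adjacency matrix is symmetric and this causes no trouble.

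There is, however, nothing to compare against: the paper does not prove Theorem~\ref{cayley} but simply quotes it from the literature (Babai; Diaconis--Shahshahani; Lubotzky; Ram Murty). Your write-up is essentially the argument one finds in those references, so in that sense it matches the ``paper's proof'' by proxy.
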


Let $\mathcal{S}_{n}$ be the symmetric group on $[n]=\{1, \ldots, n\}$ and $S\subseteq \mathcal S_n$ be closed under conjugation. Since central
characters are algebraic integers (\cite[Theorem 3.7 on p. 36]{Isaacs}) and that the characters of the symmetric group are integers (\cite[2.12 on p. 31]{Isaacs} or \cite[Corollary 2 on p. 103]{Serre}), by Theorem \ref{cayley}, the eigenvalues of $\Gamma(\mathcal S_n,S)$ are integers.

\begin{cor}\label{cor_cayley_integer}
The eigenvalues of a normal Cayley graph $\Gamma(\mathcal S_n,S)$ are integers.
\end{cor}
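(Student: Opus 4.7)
The plan is to apply Theorem \ref{cayley} directly and then argue that each of the resulting expressions $\eta_\chi = \frac{1}{\chi(1)} \sum_{s \in S} \chi(s)$ is in fact an integer. A priori this quantity is merely a rational number, so the work is to promote it to an integer using two classical facts from the representation theory of $\mathcal{S}_n$.

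First, I would decompose $S$ into conjugacy classes. Since $S$ is closed under conjugation, write $S = C_1 \sqcup \cdots \sqcup C_k$ where each $C_i$ is a conjugacy class of $\mathcal{S}_n$, and pick a representative $g_i \in C_i$. Then $\chi$ is constant on each $C_i$, so
\[
\eta_\chi = \frac{1}{\chi(1)} \sum_{i=1}^{k} |C_i|\,\chi(g_i) = \sum_{i=1}^{k} \omega_\chi(C_i),
\]
where $\omega_\chi(C_i) = |C_i|\chi(g_i)/\chi(1)$ is the central character value.

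Next, I would invoke the two standard facts cited in the paragraph preceding the corollary. On the one hand, $\omega_\chi(C_i)$ is an algebraic integer for every irreducible character $\chi$ and every conjugacy class $C_i$ (this is Theorem 3.7 in Isaacs, obtained from the fact that the class sums act on an irreducible module by scalar multiplication and satisfy a monic integer polynomial coming from the structure constants of the center of the group algebra). On the other hand, every irreducible character of $\mathcal{S}_n$ is integer-valued (for instance via the Murnaghan–Nakayama rule, or the fact that every element of $\mathcal{S}_n$ is conjugate to each of its integer powers coprime to its order). Consequently $\omega_\chi(C_i) = |C_i|\chi(g_i)/\chi(1)$ is rational, and being a rational algebraic integer it lies in $\mathbb{Z}$.

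Summing over $i$ yields $\eta_\chi \in \mathbb{Z}$. Since by Theorem \ref{cayley} every eigenvalue of $\Gamma(\mathcal{S}_n, S)$ arises in this way, the graph is integral. There is no substantive obstacle here: the corollary is really a bookkeeping exercise packaging the two cited facts, and the only thing worth stating with care is the standard lemma that a rational number which is also an algebraic integer is an ordinary integer.
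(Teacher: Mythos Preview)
Your argument is correct and is exactly the approach the paper itself takes: in the paragraph immediately preceding the corollary the authors invoke the same two facts (central characters are algebraic integers, and characters of $\mathcal{S}_n$ are integer-valued) together with Theorem~\ref{cayley}. Your write-up simply makes explicit the decomposition of $S$ into conjugacy classes and the ``rational algebraic integer $\Rightarrow$ integer'' step that the paper leaves implicit.
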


In general, if $S$ is not closed under conjugation, then the eigenvalues of $\Gamma(\mathcal S_n,S)$ may not be integers \cite{Friedman} (see also \cite{Abdo, Kra, Renteln2} for related results on the eigenvalues of certain Cayley graphs).

\begin{prob}\label{problem_1} Find conditions on $S$, so that the eigenvalues of $\Gamma(\mathcal S_n,S)$  are integers.
\end{prob}

Let $2\leq r\leq n$ and $\Cy(r)$ be the set of all $r$ cycles in $\mathcal S_n$ which do not fix 1, i.e.
\begin{equation}
\Cy(r)=\{ \alpha\in\mathcal S_n \mid \alpha(1)\neq 1\ \textnormal{and $\alpha$ is an $r$-cycle}\}.\notag
\end{equation}
For instance, $\Cy(2)=\{ (1\ 2), (1\ 3),\dots ,(1\ n)\}$. It was conjectured by  Abdollahi and Vatandoost \cite{Abdo} that  the eigenvalues of $\Gamma(\mathcal S_n,\Cy(2))$  are integers, and contains all integers in the range from $-(n-1)$ to $n-1$ (with the sole exception that when $n=2$ or $3$, zero is not an eigenvalue of $\Gamma(\mathcal S_n,\Cy(2))$.  The second part of the conjecture was proved by Krakovski and Mohar \cite{Kra}. In fact, they showed that for $n\geq 2$ and each integer $1\leq l\leq n-1$, $\pm (n-l)$ are eigenvalues of $\Gamma(\mathcal S_n,\Cy(2))$ with multiplicity at least $\binom{n-2}{l-1}$. Furthermore, if $n\geq 4$, then 0 is an eigenvalue of $\Gamma(\mathcal S_n,\Cy(2))$ with multiplicity at least $\binom{n-1}{2}$. Later, Chapuy and F\'{e}ray \cite {cf} pointed out that the conjecture could be proved by using Jucys-Murphy elements.
In this paper, we  generalize this to the following:

\begin{thm}\label{thm_main2} The eigenvalues of ~$\Gamma(\mathcal S_n,\Cy(r))$  are integers.
\end{thm}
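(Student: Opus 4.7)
The plan is to exploit the fact that although $\Cy(r)$ is not closed under conjugation in $\mathcal{S}_n$, it is closed under conjugation by the stabilizer $H := \mathrm{Stab}_{\mathcal{S}_n}(1)$, which we identify with $\mathcal{S}_{n-1}$. Writing $C_r$ for the full conjugacy class of $r$-cycles in $\mathcal{S}_n$ and $D_r$ for the set of $r$-cycles that fix $1$, one has $\Cy(r) = C_r \setminus D_r$, and $D_r$ is precisely the conjugacy class of $r$-cycles inside $H$. Hence in the group algebra $\mathbb{C}[\mathcal{S}_n]$ the element
\[
X_r := \sum_{s \in \Cy(r)} s \;=\; A - B, \qquad A := \sum_{s \in C_r} s,\quad B := \sum_{s \in D_r} s,
\]
is the difference of a central element $A \in Z(\mathbb{C}[\mathcal{S}_n])$ and an element $B \in Z(\mathbb{C}[H])$. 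In particular, $X_r$ commutes with every element of $H$.

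The adjacency matrix of $\Gamma(\mathcal{S}_n,\Cy(r))$ acts on the left regular representation $\mathbb{C}[\mathcal{S}_n] \cong \bigoplus_{\lambda \vdash n} V^\lambda \otimes (V^\lambda)^*$ as left multiplication by $X_r$, so it suffices to prove that $X_r$ has integer eigenvalues on each irreducible $\mathcal{S}_n$-module $V^\lambda$. I would then invoke Young's branching rule, which yields the multiplicity-free decomposition $V^\lambda\!\downarrow_H = \bigoplus_{\mu \nearrow \lambda} V^\mu$, the sum running over partitions $\mu$ of $n-1$ obtained from $\lambda$ by removing a corner box. Because $X_r$ commutes with $H$ and this branching is multiplicity-free, Schur's lemma forces $X_r$ to act as a scalar $c(\lambda,\mu)$ on each summand $V^\mu$.

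Identifying these scalars is routine. Since $A$ is central in $\mathbb{C}[\mathcal{S}_n]$, it acts on all of $V^\lambda$ as the single scalar $\alpha_\lambda = |C_r|\,\chi^\lambda(\sigma)/\chi^\lambda(1)$ for any $\sigma \in C_r$; and since $B$ is central in $\mathbb{C}[H]$, it acts on $V^\mu$ as $\beta_\mu = |D_r|\,\chi^\mu(\tau)/\chi^\mu(1)$ for any $r$-cycle $\tau$ in $H$. Hence $c(\lambda,\mu) = \alpha_\lambda - \beta_\mu$. Each of $\alpha_\lambda, \beta_\mu$ is a central character value, hence an algebraic integer; and since the irreducible characters of the symmetric group are integer-valued (as cited in the excerpt just before Corollary~\ref{cor_cayley_integer}), each scalar is a rational algebraic integer, hence an ordinary integer. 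It follows that every eigenvalue of $\Gamma(\mathcal{S}_n,\Cy(r))$ is an integer.

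The key conceptual input is the decomposition $X_r = A - B$ and the recognition that it is adapted to the pair $(\mathcal{S}_n, H)$; once this is in hand, the argument is essentially a Schur-lemma calculation against the multiplicity-free branching rule, and I anticipate no serious obstacle. The only point that requires independent verification is the commutation of $X_r$ with $H$, which is immediate from the observation that conjugation by an element fixing $1$ permutes $r$-cycles and preserves the property of not fixing~$1$.
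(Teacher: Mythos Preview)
Your proof is correct and shares its key decomposition with the paper: writing $\Cy(r)=C_r\setminus D_r$ so that in $\mathbb{C}[\mathcal S_n]$ one has $X_r=A-B$ with $A$ central in $\mathbb{C}[\mathcal S_n]$ and $B$ central in $\mathbb{C}[H]$, $H=\mathrm{Stab}(1)$. The paper packages this as the observation that $\Cy(r)$ is ``nicely separated'' by the partition $T_1=\{1\}$, $T_2=\{2,\dots,n\}$, and then invokes its general Theorem~\ref{thm_main1}.

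Where the two arguments diverge is in how they pass from the decomposition to integrality. The paper argues abstractly: the adjacency matrices of $\Gamma(\mathcal S_n,C_r)$ and $\Gamma(\mathcal S_n,D_r)$ commute, each is separately integral (the second because $\Gamma(\mathcal S_n,D_r)$ is a disjoint union of copies of a normal Cayley graph on $\mathcal S_n(T_2)\cong\mathcal S_{n-1}$), and hence their difference is integral by the elementary fact about commuting symmetric matrices (Lemma~\ref{lm_commute_eigen}). You instead explicitly diagonalize $X_r$ using the multiplicity-free branching rule $V^\lambda\!\downarrow_H=\bigoplus_{\mu\nearrow\lambda}V^\mu$ and Schur's lemma, obtaining the exact eigenvalues $\alpha_\lambda-\beta_\mu$. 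Your route imports more representation theory but yields sharper information (a closed form for the spectrum, with eigenspaces indexed by pairs $(\lambda,\mu)$); the paper's route is more elementary and, via Theorem~\ref{thm_main1}, applies uniformly to any nicely separated connection set, not just $\Cy(r)$.
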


In fact, Theorem \ref{thm_main2} follows from Theorem \ref{thm_main1} which states that for certain  subsets $S$ of $\mathcal S_n$, the eigenvalues of $\Gamma(\mathcal S_n,S)$ are integers.

\subsection{Arrangement graphs}

For $k\le n$, a {\em $k$-permutation} of $[n]$ is an injective function from $[k]$ to $[n]$.
So any $k$-permutation $\pi$ can be represented by a vector $(i_1,\ldots,i_k)$ where $\pi(j)=i_j$ for $j=1,\ldots,k$. Let $1\leq r\leq k\leq n$.
The {\em $(n,k,r)$-arrangement graph}   $A(n,k,r)$ has all the $k$-permutations of $[n]$ as vertices and two $k$-permutations are adjacent if they differ in exactly $r$ positions. Formally, the vertex set $V(n,k)$ and edge set $E(n,k,r)$ of $A(n,k,r)$ are
\begin{align}
V(n,k) & =\big\{(p_1,p_2,\dots ,p_k) \mid p_i\in [n]\ \textnormal{and}\ p_i\neq p_j\ \textnormal{for}\ i\neq j\big\},\notag\\
E(n,k,r) &=\big\{ \{(p_1,p_2,\dots ,p_k),(q_1,q_2,\dots ,q_k)\} \mid p_i\neq q_i \ \textnormal {for $i\in R$ and}\notag\\
&\hskip 1cm p_j= q_j \ \textnormal{for all $j\in [k]\setminus R$ for some $R\subseteq [k]$ with $\vert R\vert=r$}\big\}.\notag
\end{align}
Note that $\vert V(n,k)\vert=n!/(n-k)!$ and $A(n,k,r)$ is a regular graph (part (c) of Theorem  \ref{thm_main3}). In particular, $A(n,k,1)$ is a $k(n-k)$-regular graph. We note here that $A(n,k,1)$ is called partial permutation graph in \cite{Kra}.

The family of the arrangement graphs $A(n,k,1)$ was first introduced in \cite{dt} as an interconnection network model for parallel computation.
In the interconnection network model, each processor has its own memory unit and
communicates with the other processors through a topological network, i.e. a graph.
For this purpose, the arrangement graphs possess many nice properties such as having small diameter, a hierarchical structure, vertex and edge symmetry, simple shortest path routing, high connectivity, etc. Many properties of the arrangement graphs $A(n,k,1)$ have been studied in \cite{cll,cly,cgq,cqs,cc,dt,ttt,zx}.

Let us look at the eigenvalues of $A(n,k,1)$. Since $A(n,k,1)$ is a $k(n-k)$-regular graph, the largest eigenvalue is $k(n-k)$. When $n=k$, the edge set is an empty set. So,  $A(k,k,1)$ has one eigenvalue only, which is 0. When $k=1$, $A(n,1,1)$ is the complete graph with $n$ vertices. Therefore the eigenvalues of $A(n,1,1)$ are $(n-1)$ with multiplicity 1, and $-1$ with multiplicity $n-1$.
The eigenvalues of the arrangement graphs $A(n,k,1)$ were first studied in \cite{Chen} by using a method developed by Godsil and McKay  \cite{gm}.

In this paper, we will study the eigenvalues of $A(n,k,r)$. We will give a relation between the eigenvalues of $A(n,k,r)$ and the eigenvalues of certain Cayley graphs (Theorem \ref{thm_main3}). We then apply Theorem \ref{thm_main1} to prove following theorem which was conjectured in \cite{Chen}.

\begin{thm}\label{thm_main4} The eigenvalues of $A(n,k,1)$ are integers.
\end{thm}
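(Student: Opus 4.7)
The plan is to carry out the two-step reduction that the paper has set up. First, I would apply Theorem \ref{thm_main3} to express each eigenvalue of $A(n,k,1)$ as an eigenvalue of a specific Cayley graph $\Gamma(\mathcal{S}_n,S)$ for an appropriately chosen connection set $S\subseteq\mathcal{S}_n$. Second, I would invoke Theorem \ref{thm_main1}, which establishes integrality of the eigenvalues of $\Gamma(\mathcal{S}_n,S)$ for certain (possibly non-normal) $S$, and conclude.

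To identify the relevant Cayley graph, I would represent a $k$-permutation $(p_1,\dots,p_k)$ as a coset of the Young subgroup $H=\{\pi\in\mathcal{S}_n:\pi(i)=i\ \text{for all}\ i\in[k]\}\cong\mathcal{S}_{n-k}$, where the representative $\sigma$ satisfies $\sigma(i)=p_i$ for $i\in[k]$. The edges of $A(n,k,1)$ come from one-position differences: for adjacent $k$-permutations, an appropriate choice of representatives $\sigma,\sigma'$ makes $\sigma'\sigma^{-1}$ fix $k-1$ of the values $p_1,\dots,p_k$ and cyclically permute the remaining value together with some subset of $[n]\setminus\{p_1,\dots,p_k\}$. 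A direct cycle analysis then shows this product is a single $r$-cycle, for some $2\le r\le n-k+1$, whose support meets $\{p_1,\dots,p_k\}$ in exactly one point. Aggregating these cycles over all $r$ gives the natural candidate connection set $S$, and Theorem \ref{thm_main3} should identify each eigenvalue of $A(n,k,1)$ with an eigenvalue of $\Gamma(\mathcal{S}_n,S)$ (coming from the $H$-trivial part of the representation).

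Next I would apply Theorem \ref{thm_main1}. The set $S$ constructed above is closed under inverses (the inverse of an $r$-cycle is an $r$-cycle with the same support), so $\Gamma(\mathcal{S}_n,S)$ is undirected. However, $S$ is not closed under conjugation by all of $\mathcal{S}_n$, because conjugation by an arbitrary permutation can produce an $r$-cycle whose support meets $[k]$ in more than one point; thus Corollary \ref{cor_cayley_integer} does not apply directly. The role of Theorem \ref{thm_main1} is precisely to handle such partially-symmetric connection sets, generalising what Theorem \ref{thm_main2} does for $\textnormal{Cy}(r)$, and provided our $S$ fits its hypotheses, the integrality of all eigenvalues of $\Gamma(\mathcal{S}_n,S)$, and hence of $A(n,k,1)$, follows.

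The main obstacle is to verify that the $S$ described above fits the hypotheses of Theorem \ref{thm_main1}, and to confirm that Theorem \ref{thm_main3} indeed realises \emph{every} eigenvalue of $A(n,k,1)$ as an eigenvalue of a single well-chosen Cayley graph (rather than as a more intricate combination of several). Assuming these two bookkeeping checks go through, the theorem follows by chaining the two black-box results with essentially no further computation.
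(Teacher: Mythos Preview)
Your two ``bookkeeping checks'' are precisely where the argument breaks, and the paper's proof differs from your plan in an essential way.

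First, Theorem~\ref{thm_main3}(d) does not say that each eigenvalue $\lambda$ of $A(n,k,1)$ is an eigenvalue of $\Gamma(\mathcal S_n,M(1))$; it says that $(n-k)!\,\lambda$ is. So even if you proved $\Gamma(\mathcal S_n,M(1))$ integral, you would only conclude $(n-k)!\,\lambda\in\mathbb Z$, which is strictly weaker than Theorem~\ref{thm_main4}. Your suggested connection set of single cycles meeting $[k]$ in exactly one point does not repair this: for that $S$ the cosets $\mathcal S_n(T)\alpha_i$ do not give an equitable partition with $\{0,1\}$ quotient (multiplying a cycle $\gamma$ by an arbitrary element of $\mathcal S_n(T)$ need not yield a single cycle), so you cannot identify the quotient with $A_{n,k}$.

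Second, your $S$ is not nicely separated once $k\ge2$. With the partition $T_1=[k]$, $T_2=[n]\setminus[k]$, the set $\bigcup_{s\in S}\C_{\mathcal S_n}(s)\cap\mathcal T(T_1,T_2)$ contains every $r$-cycle meeting both $T_1$ and $T_2$, including those meeting $[k]$ in two or more points; these are not in your $S$. The same obstruction applies to $M(1)$. Hence Theorem~\ref{thm_main1} does not apply to either set.

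The paper avoids both problems by choosing a much smaller connection set, namely $S=\{(i\ j):i\in[k],\,j\in[n]\setminus[k]\}$, the transpositions only. This $S$ \emph{is} nicely separated (Example~\ref{ex_1}(a)), so $\Gamma(\mathcal S_n,S)$ is integral by Theorem~\ref{thm_main1}. The paper then proves a refined equitable-partition lemma (Lemma~\ref{lm_equi2}), the key point being part~(b): for this $S$, each vertex in one coset has \emph{at most one} neighbour in any other coset, so the quotient matrix equals $A_{n,k}$ on the nose, with no $(n-k)!$ factor. That is the missing idea in your plan.
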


\section{Preliminaries}

 This section contains preliminary materials which will be used to prove the results of the paper.

\subsection{Adjacency matrix of Cayley graphs}


Let $G$ be a finite group. Note that the ring $\mathbb C[G]$ can be considered as a vector space over $\mathbb C$ with the {\em standard basis} $\{1=g_1,g_2,\dots, g_m\}=G$. Let $S$ be  an inverse closed subset of $G$. Then
\begin{equation}
\left(\sum_{s\in S} s\right) g_i=\sum_{j=1}^m b_{ij}g_j,\notag
\end{equation}
where $b_{ij}\in\{ 0,1\}$. Let $B=[b_{ij}]$. Then we may write
\begin{equation}
\left(\sum_{s\in S} s\right) \mathbf g =B\mathbf g,\notag
\end{equation}
where $\mathbf g=(g_1,g_2,\dots, g_m)^\top$. Note that $B$ is the adjacency matrix of $\Gamma (G,S)$. 

Let $H$ be a subgroup of $G$ and $G=\bigcup_{i=1}^l Ha_i$ be the disjoint union of all the right cosets of $H$. We may assume that $a_1=1$. Let $H=\{1=h_1,\dots, h_e\}$. Then
$$G=\{1=h_1,\dots, h_e, h_1a_2,\dots, h_ea_2,\dots, h_1a_l,\dots, h_ea_l\}$$
 is the standard basis for $\mathbb C[G]$ and $\mathbb C[G]$ can be decomposed into sum of $\mathbb C[H]$-modules, i.e.
\begin{equation}
\mathbb C[G]=\bigoplus_{i=1}^l \mathbb C[Ha_i].\notag
\end{equation}
Note that $\mathbb C[Ha_i]$ and $\mathbb C[H]$ are isomorphic $\mathbb C[H]$-modules. The following lemma is obvious.

\begin{lem}\label{lemma_breakup}
Suppose that $S$ is a subset of $H$. If $B_H$ is the adjacency matrix of $\Gamma (H,S)$, then the adjacency matrix $B_G$ of $\Gamma (G,S)$ is
\begin{equation}
B_G=\begin{pmatrix}
B_H &  & \\
 & \ddots &\\
&  & B_H
\end{pmatrix}.\notag
\end{equation}
In fact, $\Gamma (G,S)$ is the disjoint union of $l$ copies of $\Gamma (H,S)$.
\end{lem}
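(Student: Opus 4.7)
The plan is to exploit the fact that $S \subseteq H$ forces left multiplication by elements of $S$ to preserve right cosets of $H$. Concretely, for any coset representative $a_i$, any $h \in H$, and any $s \in S$, one has $s(ha_i) = (sh)a_i$, and since $s, h \in H$ we get $sh \in H$. Therefore every neighbor in $\Gamma(G,S)$ of a vertex lying in $Ha_i$ also lies in $Ha_i$, so no edge of $\Gamma(G,S)$ crosses between distinct right cosets.

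Next I would verify that the subgraph of $\Gamma(G,S)$ induced on $Ha_i$ is isomorphic to $\Gamma(H,S)$. The natural map $\phi_i : H \to Ha_i$ defined by $\phi_i(h) = ha_i$ is a bijection, and it preserves adjacency in both directions: $\{h, h'\}$ is an edge of $\Gamma(H,S)$ iff $h' = sh$ for some $s \in S$, iff $h'a_i = s(ha_i)$, iff $\{ha_i, h'a_i\}$ is an edge of $\Gamma(G,S)$. Hence each $\phi_i$ is an isomorphism of Cayley graphs, and $\Gamma(G,S)$ decomposes as the disjoint union of the $l$ isomorphic copies $\Gamma(H,S)$, one sitting on each coset $Ha_i$.

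Finally, I would translate this combinatorial decomposition into the block structure of the adjacency matrix. With respect to the ordered standard basis of $\mathbb{C}[G]$ specified just before the lemma, namely $h_1, \dots, h_e, h_1 a_2, \dots, h_e a_2, \dots, h_1 a_l, \dots, h_e a_l$, the vertices of $\Gamma(G,S)$ are listed coset by coset in the same internal order as the vertices of $\Gamma(H,S)$. Because the only edges lie within cosets and each within-coset block reproduces $\Gamma(H,S)$ via $\phi_i$, the resulting adjacency matrix $B_G$ is block diagonal with $l$ identical diagonal blocks equal to $B_H$, as claimed.

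There is no real obstacle here; the only thing to check carefully is that the ordering convention of the basis matches the ordering implicit in the block form displayed in the lemma, so that the diagonal blocks are literally $B_H$ rather than merely matrices similar to $B_H$ via a permutation. This is immediate from the listing of the basis chosen by the authors.
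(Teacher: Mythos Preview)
Your argument is correct and complete: you verify that left multiplication by $s\in S\subseteq H$ preserves each right coset $Ha_i$, that the induced subgraph on $Ha_i$ is isomorphic to $\Gamma(H,S)$ via $h\mapsto ha_i$, and that the chosen ordering of the basis yields the block-diagonal form with blocks literally equal to $B_H$. The paper itself offers no proof, declaring the lemma ``obvious''; your write-up simply supplies the routine details behind that assertion.
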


\subsection{Symmetric groups}

 Let $\alpha,\beta\in \mathcal S_n$. Then  $\alpha\beta=\beta\circ \alpha$, i.e. $\alpha\beta(i)=\alpha(\beta(i))$ for all $i\in [n]$. For example, if $\alpha=(1\ \ 2 \ \ 3)$ and $\beta=(2\ \ 3)$, then
\begin{equation}
\alpha\beta=(1\ \ 2 \ \ 3)(2\ \ 3)=(1 \ \ 3).\notag
\end{equation}

\begin{rem} Note that some authors define $\alpha\beta$ as $\alpha\beta(i)=\beta(\alpha(i))$. If one follows this definition, then $(1\ \ 2 \ \ 3)(2\ \ 3)=(1 \ \ 2)$.
\end{rem}

Let $(c_1\ \ c_2\ \cdots \ c_l)$ be an $l$-cycle in $\mathcal S_n$. Then
\begin{equation}\label{eq_conjugate}
\alpha^{-1}\ (c_1\ \ c_2\ \cdots \ c_l)\ \alpha=(\alpha(c_1)\ \ \alpha(c_2)\  \cdots \ \alpha(c_l)),
\end{equation}
which is also an $l$-cycle.

Note that every element in $\mathcal S_n$ can be decomposed into product of disjoint cycles and the decomposition is unique. Let $a_1,\dots, a_n$ be non-negative integers such that $\sum_{i=1}^n ia_i=n$.
An element $\alpha\in \mathcal S_n$ is said to be of type  $1^{a_1}2^{a_2}\ldots n^{a_n}$ if $\alpha$ has exactly $a_i$ number of  $i$-cycles in its decomposition. Let $\mathcal K(a_1,\dots, a_n)$ be the set of all elements of $\mathcal S_n$ of type  $1^{a_1}2^{a_2}\ldots n^{a_n}$. By \eqref{eq_conjugate}, one can deduce that $\mathcal K(a_1,\dots, a_n)=\{ g^{-1}ug \mid  g\in \mathcal S_n\}$ where $u$ is an element of type $1^{a_1}2^{a_2}\ldots n^{a_n}$. Furthermore,
$$\vert \mathcal K(a_1,\dots, a_n)\vert=\frac{n!}{\prod_{i=1}^n a_i!i^{a_i}}.$$

\section{Integrality}

In this section we establish the integrality of certain classes of Cayley graphs. As a corollary, Theorem~\ref{thm_main2} follows.

For each $u\in G$, let $\C_G(u)$ be the conjugacy class of $u$ in $G$, i.e.
\begin{equation}
\C_G(u)=\{ g^{-1}ug\mid  g\in G\}.\notag
\end{equation}


A pair of subsets $S_1$ and $S_2$ of $G$ are said to be {\em commutative} if
\begin{equation}
\left(\sum_{s\in S_1} s\right)\left(\sum_{s\in S_2} s\right)=\left(\sum_{s\in S_2} s\right)\left(\sum_{s\in S_1} s\right),\notag
\end{equation}
in the ring $\mathbb C[G]$.

We shall use the following well-known fact (see, e.g. \cite[p. 92]{hj}).

\begin{lem}\label{lm_commute_eigen} Let $B$ and $C$ be two real $m\times m$ symmetric matrices and $r,s\in \mathbb R$. If $BC=CB$, then every eigenvalue of $rB+sC$ is of the form $r\lambda+s\gamma$ where $\lambda$ and $\gamma$ are eigenvalues of $B$ and $C$, respectively.
\end{lem}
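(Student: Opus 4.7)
The plan is to prove the lemma by producing a common orthonormal eigenbasis for $B$ and $C$ and then reading off the eigenvalues of $rB+sC$ directly.

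First I would invoke the spectral theorem: any real symmetric matrix is orthogonally diagonalizable, and in particular $B$ admits a decomposition of $\mathbb R^m$ into pairwise orthogonal eigenspaces $V_{\lambda_1},\dots,V_{\lambda_t}$ corresponding to its distinct eigenvalues $\lambda_1,\dots,\lambda_t$. The hypothesis $BC=CB$ then gives, for any $v\in V_{\lambda_i}$, $B(Cv)=C(Bv)=\lambda_i(Cv)$, so $Cv\in V_{\lambda_i}$. Thus each $V_{\lambda_i}$ is $C$-invariant.

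Next I would restrict $C$ to each $V_{\lambda_i}$. Because $C$ is symmetric and the $V_{\lambda_i}$ are orthogonal, the restriction $C|_{V_{\lambda_i}}$ is again symmetric (with respect to the inherited inner product), so the spectral theorem applied inside each $V_{\lambda_i}$ produces an orthonormal basis of $V_{\lambda_i}$ consisting of eigenvectors of $C$; each such basis vector is simultaneously an eigenvector of $B$ (with eigenvalue $\lambda_i$). Concatenating over $i=1,\dots,t$ yields an orthonormal basis of $\mathbb R^m$ diagonalizing $B$ and $C$ simultaneously, i.e.\ an orthogonal matrix $P$ with
\begin{equation}
P^\top B P = \mathrm{diag}(\lambda_{i_1},\dots,\lambda_{i_m}), \qquad P^\top C P = \mathrm{diag}(\gamma_{j_1},\dots,\gamma_{j_m}),\notag
\end{equation}
where $\lambda_{i_k}$ (resp.\ $\gamma_{j_k}$) is an eigenvalue of $B$ (resp.\ of $C$).

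Finally, conjugation is linear, so
\begin{equation}
P^\top(rB+sC)P = r\,P^\top B P + s\,P^\top C P = \mathrm{diag}\bigl(r\lambda_{i_k}+s\gamma_{j_k}\bigr)_{k=1}^m,\notag
\end{equation}
which exhibits every eigenvalue of $rB+sC$ as $r\lambda+s\gamma$ for some eigenvalues $\lambda$ of $B$ and $\gamma$ of $C$. There is no real obstacle; the only step where the full hypothesis is used is the invariance of each $B$-eigenspace under $C$, which is exactly what allows symmetric $C$ to be diagonalized inside those spaces and produces the common eigenbasis.
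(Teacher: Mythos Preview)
Your argument is correct: commuting real symmetric matrices are simultaneously orthogonally diagonalizable, and once you have a common eigenbasis the eigenvalues of $rB+sC$ are visibly of the required form. The only point worth making explicit is that an eigenvalue of the restriction $C|_{V_{\lambda_i}}$ is automatically an eigenvalue of $C$ itself (since the eigenvector lives in $\mathbb R^m$), but you use this implicitly and it is immediate.

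As for comparison with the paper: there is nothing to compare. The paper does not prove this lemma at all; it simply states it as a well-known fact and cites Horn and Johnson \cite[p.~92]{hj}. Your write-up is the standard proof one would find behind such a citation.
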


\begin{lem}\label{lm_integer_eigen} Let $\Gamma(G,S_1)$ and $\Gamma(G,S_2)$ be two integral Cayley graphs. Suppose that $S_1$ and $S_2$ are commutative. Then the following assertions hold.
\begin{itemize}
\item[\textnormal{(a)}] If $S_1\cap S_2=\emptyset$, then $\Gamma(G,S_1\cup S_2)$ is integral.
\item[\textnormal{(b)}] If $S_2\subseteq S_1$, then $\Gamma(G,S_1\setminus S_2)$ is integral.
\end{itemize}
\end{lem}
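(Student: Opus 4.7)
My plan is to translate the hypothesis that $S_1$ and $S_2$ commute in $\mathbb{C}[G]$ into commutativity of the corresponding adjacency matrices, and then invoke Lemma~\ref{lm_commute_eigen}.

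First I would fix the notation from the discussion preceding Lemma~\ref{lemma_breakup}. Writing $A_i = \sum_{s \in S_i} s \in \mathbb{C}[G]$ for $i=1,2$, the left action of $A_i$ on the standard basis is described by $A_i \mathbf{g} = B_i \mathbf{g}$, where $B_i$ is the adjacency matrix of $\Gamma(G,S_i)$ and $\mathbf{g} = (g_1,\ldots,g_m)^\top$. A short computation on basis elements shows that $A_1 A_2 \mathbf{g} = B_2 B_1 \mathbf{g}$ and $A_2 A_1 \mathbf{g} = B_1 B_2 \mathbf{g}$. Hence the assumed identity $A_1 A_2 = A_2 A_1$ in $\mathbb{C}[G]$ yields $B_1 B_2 = B_2 B_1$. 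Since $S_1, S_2$ are inverse-closed (being Cayley connection sets), $B_1, B_2$ are real symmetric, and by hypothesis their eigenvalues are integers.

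For part (a), the disjointness $S_1 \cap S_2 = \emptyset$ implies that the adjacency matrix of $\Gamma(G, S_1 \cup S_2)$ equals $B_1 + B_2$ (no edge is double-counted). Applying Lemma~\ref{lm_commute_eigen} with $r = s = 1$ to the commuting symmetric pair $(B_1, B_2)$ shows every eigenvalue of $B_1 + B_2$ has the form $\lambda + \gamma$, where $\lambda$ and $\gamma$ are eigenvalues of $B_1$ and $B_2$ respectively; both are integers, so their sum is an integer. For part (b), the inclusion $S_2 \subseteq S_1$ gives the adjacency matrix of $\Gamma(G, S_1 \setminus S_2)$ as $B_1 - B_2$, and Lemma~\ref{lm_commute_eigen} with $r=1$, $s=-1$ concludes the argument in the same way.

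The proof is essentially a bookkeeping step together with a single invocation of the commuting-matrices lemma, so I do not expect a substantial obstacle. The only point that requires some care is verifying that left multiplication by $A_i$ on the regular basis represents \emph{exactly} the adjacency matrix of $\Gamma(G,S_i)$ (and not its transpose); this uses inverse-closedness of $S_i$, which simultaneously guarantees that $B_i$ is symmetric so that Lemma~\ref{lm_commute_eigen} is applicable.
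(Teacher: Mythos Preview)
Your proposal is correct and follows essentially the same route as the paper's own proof: translate commutativity of $S_1,S_2$ in $\mathbb{C}[G]$ into commutativity of the adjacency matrices, then apply Lemma~\ref{lm_commute_eigen} to $B_1\pm B_2$ and identify these with the adjacency matrices of $\Gamma(G,S_1\cup S_2)$ and $\Gamma(G,S_1\setminus S_2)$. Your extra care about symmetry of the $B_i$ and the transpose issue is a welcome clarification but does not change the argument.
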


\begin{proof} Let $A_1$ and $A_2$ be the adjacency of the Cayley graph corresponding to $S_1$ and $S_2$, respectively. Now, commutativity of $S_1$ and $S_2$ implies that $A_1A_2=A_2A_1$ (see Section 2.1).  Since all the eigenvalues of $A_1$ and $A_2$ are integers, by Lemma \ref{lm_commute_eigen}, the eigenvalues of $A_1+A_2$ and $A_1-A_2$ are integers. The lemma follows by noting that $A_1+A_2$ and $A_1-A_2$ are  the adjacency matrices of $\Gamma(G,S_1\cup S_2)$ (when $S_1\cap S_2=\emptyset$) and
$\Gamma(G,S_1\setminus S_2)$ (when $S_2\subseteq S_1$), respectively.
\end{proof}

Let $\alpha\in\mathcal S_n$. We denote the set of elements in $[n]$ moved by $\alpha$ by $\M(\alpha)$, i.e.
\begin{equation}
\M(\alpha)=\{ i\in [n] \mid  \alpha(i)\neq i\}.\notag
\end{equation}
Let $T\subseteq [n]$ and
\begin{equation}
\mathcal S_n(T)=\{ \alpha\in \mathcal S_n \mid  \M(\alpha)\subseteq [T]\}.\notag
\end{equation}
In fact, $\mathcal S_n(T)$ are all those permutations that fix the complement of $T$. Let $\id$ denote the identity element in $\mathcal S_n$. Then $\M(\id)=\emptyset$ and $\id\in \mathcal S_n(T)$. So, $\mathcal S_n(T)\neq \emptyset$. In fact, $\mathcal S_n(T)$ is a subgroup of $\mathcal S_n$ and is isomorphic to $\mathcal S_{\vert T\vert}$.

For a partition $T_1,\dots ,T_l$ of $[n]$, i.e. $[n]=\bigcup_{i=1}^l T_i$ and $T_i\cap T_j=\emptyset$ for $i\neq j$, we set
\begin{equation}
\mathcal T(T_1,\dots, T_l)=\mathcal S_n\setminus \left( \bigcup_{i=1}^l \mathcal S_n(T_i)  \right).\notag
\end{equation}

A subset $S$ of $\mathcal S_n$ is said to be \emph{nicely separated} if there exists a partition $T_1,\dots ,T_l$ ($l\geq 2$) of $[n]$ such that
\begin{equation}
S=\bigcup_{s\in S} \C_{\mathcal S_n}(s)\cap \mathcal T(T_1,\dots, T_l).\notag
\end{equation}

\begin{lem}\label{lm_symmetric} If $S$ is nicely separated, then  $s\in S$ implies that $s^{-1}\in S$.
\end{lem}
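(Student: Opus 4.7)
The plan is to verify closure under inversion directly from the definition. Since $S$ is nicely separated, we have
\begin{equation*}
S = \left(\bigcup_{s \in S} \C_{\mathcal S_n}(s)\right) \cap \mathcal T(T_1,\dots,T_l),
\end{equation*}
so for any given $s \in S$ it suffices to show two things about $s^{-1}$: first, that $s^{-1}$ lies in some conjugacy class $\C_{\mathcal S_n}(s')$ already appearing in the union (i.e., with $s' \in S$), and second, that $s^{-1} \in \mathcal T(T_1,\dots,T_l)$.

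For the first point, I would invoke the standard fact that conjugacy classes of $\mathcal S_n$ are determined by cycle type. Since $s$ and $s^{-1}$ share the same cycle type (inverting a disjoint cycle decomposition reverses each cycle but preserves lengths), we have $s^{-1} \in \C_{\mathcal S_n}(s)$. Taking $s' = s$ puts $s^{-1}$ in the relevant union of conjugacy classes.

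For the second point, observe that $s(i) = i$ if and only if $s^{-1}(i) = i$, so $\M(s) = \M(s^{-1})$. Consequently $s^{-1} \in \mathcal S_n(T_i)$ if and only if $s \in \mathcal S_n(T_i)$, and therefore $s \in \mathcal T(T_1,\dots,T_l)$ if and only if $s^{-1} \in \mathcal T(T_1,\dots,T_l)$. Combining the two points yields $s^{-1} \in S$.

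There is no real obstacle here; the lemma is essentially a bookkeeping statement that combines two basic symmetries of $\mathcal S_n$ (invariance of cycle type under inversion, and invariance of the moved-point set under inversion) with the form of the definition. The only care needed is to notice that $\mathcal T(T_1,\dots,T_l)$ is itself closed under inversion, which is immediate from $\M(s) = \M(s^{-1})$.
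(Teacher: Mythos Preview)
Your proposal is correct and follows essentially the same approach as the paper: both arguments use that $s$ and $s^{-1}$ share the same cycle type (so $s^{-1}\in\C_{\mathcal S_n}(s)$) and that $\M(s)=\M(s^{-1})$ (so membership in $\mathcal T(T_1,\dots,T_l)$ is preserved under inversion). The only cosmetic difference is that the paper phrases the second point by picking explicit elements $a\in T_i$, $b\in T_j$ in $\M(s)$, while you argue directly via the equivalence $s\in\mathcal S_n(T_i)\iff s^{-1}\in\mathcal S_n(T_i)$.
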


\begin{proof} Let $s\in S$. Then $s^{-1}\in \C_{\mathcal S_n}(s)$ for $s$ and $s^{-1}$ must be of the same type (see Section 2.3).
Since $s\in \mathcal T(T_1,\dots, T_l)$, there exist $a\in T_i$ and $b\in T_j$ such that $a,b\in \M(s)$ for some $i,j$ with $i\neq j$. This implies that $a,b\in \M(s^{-1})=\M(s)$ and $s^{-1}\in \mathcal T(T_1,\dots, T_l)$. Hence $s^{-1}\in S$.
\end{proof}

Now, $\id\notin S$ for $\id\notin \mathcal T(T_1,\dots, T_l)$. Hence $\Gamma(\mathcal S_n, S)$ is a Cayley graph.

\begin{thm}\label{thm_main1} If $S$ is nicely separated, then $\Gamma(\mathcal S_n, S)$ is integral.
\end{thm}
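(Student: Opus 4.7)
My plan is to write $S$ as a set-theoretic difference $S^{*}\setminus\bigcup_{i=1}^{l}S_{i}$ where both $\Gamma(\mathcal{S}_{n},S^{*})$ and $\Gamma(\mathcal{S}_{n},\bigcup_{i=1}^{l}S_{i})$ are integral, and then to strip off the latter using Lemma~\ref{lm_integer_eigen}(b). Let $S^{*}=\bigcup_{s\in S}\C_{\mathcal{S}_{n}}(s)$ denote the union of the $\mathcal{S}_{n}$-conjugacy classes meeting $S$, and put $S_{i}=S^{*}\cap\mathcal{S}_{n}(T_{i})$. Because $\mathcal{S}_{n}\setminus\mathcal{T}(T_{1},\dots,T_{l})=\bigcup_{i=1}^{l}\mathcal{S}_{n}(T_{i})$, the hypothesis $S=S^{*}\cap\mathcal{T}(T_{1},\dots,T_{l})$ forces exactly $S=S^{*}\setminus\bigcup_{i=1}^{l}S_{i}$ with $\bigcup_{i=1}^{l}S_{i}\subseteq S^{*}$.

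First I would handle $\Gamma(\mathcal{S}_{n},S^{*})$ directly: as $S^{*}$ is a union of $\mathcal{S}_{n}$-conjugacy classes, it is closed under conjugation, so this is a normal Cayley graph of $\mathcal{S}_{n}$ and Corollary~\ref{cor_cayley_integer} applies. For each $S_{i}$, conjugation by an element of the subgroup $\mathcal{S}_{n}(T_{i})$ preserves both $S^{*}$ and $\mathcal{S}_{n}(T_{i})$, and hence preserves $S_{i}$; thus $\Gamma(\mathcal{S}_{n}(T_{i}),S_{i})$ is a normal Cayley graph of $\mathcal{S}_{n}(T_{i})\cong\mathcal{S}_{|T_{i}|}$, integral by Corollary~\ref{cor_cayley_integer}. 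Lemma~\ref{lemma_breakup} applied with $H=\mathcal{S}_{n}(T_{i})$ then shows $\Gamma(\mathcal{S}_{n},S_{i})$ is a disjoint union of copies of $\Gamma(\mathcal{S}_{n}(T_{i}),S_{i})$, and so is also integral.

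To assemble the pieces, I would observe that distinct $S_{i}$ and $S_{j}$ are disjoint: indeed $\mathcal{S}_{n}(T_{i})\cap\mathcal{S}_{n}(T_{j})=\{\id\}$ while $\id\notin S^{*}$. Moreover any $s\in S_{i}$ and $t\in S_{j}$ commute as permutations because $\M(s)\subseteq T_{i}$ and $\M(t)\subseteq T_{j}$ are disjoint. Consequently the group-ring elements $\sum_{s\in S_{i}}s$ pairwise commute, so iterating Lemma~\ref{lm_integer_eigen}(a) yields the integrality of $\Gamma(\mathcal{S}_{n},\bigcup_{i=1}^{l}S_{i})$. Finally, $\sum_{s\in S^{*}}s$ lies in the center of $\mathbb{C}[\mathcal{S}_{n}]$ since $S^{*}$ is a union of conjugacy classes, and therefore commutes with $\sum_{s\in\bigcup_{i}S_{i}}s$; a single application of Lemma~\ref{lm_integer_eigen}(b) then produces the integrality of $\Gamma(\mathcal{S}_{n},S)=\Gamma(\mathcal{S}_{n},S^{*}\setminus\bigcup_{i=1}^{l}S_{i})$.

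The main thing to verify carefully is the set-theoretic identity $S=S^{*}\setminus\bigcup_{i=1}^{l}S_{i}$ together with the two closure/commutativity properties it rests on, namely that each $S_{i}$ is closed under $\mathcal{S}_{n}(T_{i})$-conjugation and that elements with supports in different blocks of the partition commute. The disjointness of the $T_{i}$ is doing all the real work here; once these combinatorial observations are pinned down, the integrality falls out mechanically from Corollary~\ref{cor_cayley_integer}, Lemma~\ref{lemma_breakup}, and Lemma~\ref{lm_integer_eigen}.
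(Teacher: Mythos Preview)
Your proposal is correct and follows essentially the same route as the paper: the paper writes $S_{0}=\bigcup_{s\in S}\C_{\mathcal S_n}(s)$ (your $S^{*}$), sets $S_{i}=S_{0}\cap\mathcal S_n(T_{i})$, and then applies Corollary~\ref{cor_cayley_integer}, Lemma~\ref{lemma_breakup}, and the two parts of Lemma~\ref{lm_integer_eigen} in exactly the order you describe. Your write-up is in fact slightly more explicit than the paper's about why $S_{i}\cap S_{j}=\emptyset$ and why $S_{i}$ and $S_{j}$ commute (disjoint supports), points the paper simply asserts.
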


\begin{proof} Note that
\begin{equation}\label{eq_defining}
S=\bigcup_{s\in S} \C_{\mathcal S_n}(s)\setminus \left( \bigcup_{1\leq i\leq l,\,s\in S} \mathcal S_n(T_i)\cap \C_{\mathcal S_n}(s)  \right).
\end{equation}
If $\mathcal S_n(T_i)\cap \C_{\mathcal S_n}(s)=\emptyset$, we shall ignore this term in \eqref{eq_defining}. Suppose that $\mathcal S_n(T_i)\cap \C_{\mathcal S_n}(s)\neq \emptyset$. Let $\alpha\in \mathcal S_n(T_i)\cap \C_{\mathcal S_n}(s)$. Then $\C_{\mathcal S_n(T_i)}(\alpha)\subseteq \mathcal S_n(T_i)\cap \C_{\mathcal S_n}(s)$. Therefore $\mathcal S_n(T_i)\cap \C_{\mathcal S_n}(s)$ is closed under conjugation with elements in $\mathcal S_n(T_i)$. Let
\begin{equation}
S_i=\bigcup_{s\in S} \mathcal S_n(T_i)\cap \C_{\mathcal S_n}(s).\notag
\end{equation}
Using similar argument as in  Lemma \ref{lm_symmetric},  we see that $s\in S_i$ implies that $s^{-1}\in S_i$. Furthermore, $\id\notin S_i$. Hence $\Gamma(\mathcal S_n, S_i)$ is a Cayley graph.
By Corollary \ref{cor_cayley_integer}, $\Gamma (\mathcal S_n(T_i),S_i)$ is integral.
  $\Gamma(\mathcal S_n, S_i)$ is a disjoint union of $[\mathcal S_n:\mathcal S_n(T_i)]$ copies of $\Gamma (\mathcal S_n(T_i),S_i)$ (Lemma \ref{lemma_breakup})  implying that  $\Gamma(\mathcal S_n, S_i)$ is integral.

Let $S_0=\bigcup_{s\in S} \C_{\mathcal S_n}(s)$. By Corollary~\ref{cor_cayley_integer}, $\Gamma(\mathcal S_n, S_0)$ is integral. Now,
\begin{equation}
S=S_0\setminus \left(\bigcup_{1\leq i\leq l} S_i  \right).\notag
\end{equation}
Note that $S_i$ and $S_j$ are commutative and  $S_i\cap S_j=\emptyset$ for $1\leq i<j\leq l$. By applying part (a) of Lemma~\ref{lm_integer_eigen} repeatedly,
we conclude that $\Gamma\left(\mathcal S_n, \bigcup_{i=1}^l S_i\right)$ is integral.
Since $S_0$ is a union of conjugacy classes in $\mathcal S_n$, $S_0$ and $\bigcup_{i=1}^l S_i$ is commutative. Hence, by part (b) of Lemma \ref{lm_integer_eigen}, $\Gamma(\mathcal S_n, S)$ is integral.
\end{proof}

\begin{proof}[Proof of Theorem \ref{thm_main2}] It follows from Theorem \ref{thm_main1} by noting that $\Cy(r)$ is nicely separated with $T_1=\{1\}$ and $T_2=\{2,3,\dots, n\}$.
\end{proof}

Recall that $\mathcal K(a_1,\dots, a_n)=\{ g^{-1}ug \mid  g\in \mathcal S_n\}$ where $u$ is an element of $\mathcal S_n$ which has exactly $a_i$ number of  $i$-cycles in its cycle decomposition.

\begin{cor}\label{cor_consequence} Let $T_1,\dots ,T_l$ be a partition of $[n]$ and $S=\mathcal K(a_1,\dots, a_n)\setminus \bigcup_{i=1}^l \mathcal S_n(T_i)$. Then   $\Gamma(\mathcal S_n, S)$ is integral.
\end{cor}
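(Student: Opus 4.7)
The plan is simply to verify that the given $S$ is nicely separated with respect to the partition $T_1,\dots,T_l$, and then invoke Theorem~\ref{thm_main1}. The key observation, already recorded in Section~2.3, is that $\mathcal K(a_1,\dots,a_n)$ is itself a single conjugacy class of $\mathcal S_n$, namely $\C_{\mathcal S_n}(u)$ for any $u\in\mathcal S_n$ of type $1^{a_1}2^{a_2}\cdots n^{a_n}$.

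Consequently, for every $s\in S\subseteq \mathcal K(a_1,\dots,a_n)$ one has $\C_{\mathcal S_n}(s)=\mathcal K(a_1,\dots,a_n)$. Combining this with the immediate identity $\mathcal T(T_1,\dots,T_l)=\mathcal S_n\setminus \bigcup_{i=1}^l \mathcal S_n(T_i)$, I would write out
\[
\bigcup_{s\in S}\C_{\mathcal S_n}(s)\cap \mathcal T(T_1,\dots,T_l)
\;=\;\mathcal K(a_1,\dots,a_n)\setminus \bigcup_{i=1}^l \mathcal S_n(T_i)
\;=\;S,
\]
which is precisely the defining equation for $S$ to be nicely separated (provided $l\geq 2$, as required by the definition).

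Before appealing to Theorem~\ref{thm_main1}, I would quickly dispose of two degenerate cases. If $l=1$, then $T_1=[n]$ and $\mathcal S_n(T_1)=\mathcal S_n$, so $S=\emptyset$ and $\Gamma(\mathcal S_n,S)$ is the empty graph, which is trivially integral. Likewise if $l\geq 2$ but $S=\emptyset$, there is nothing to prove. In the remaining case $l\geq 2$ with $S\neq\emptyset$, the display above shows that $S$ is nicely separated, and Theorem~\ref{thm_main1} immediately yields integrality of $\Gamma(\mathcal S_n,S)$.

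There is essentially no obstacle here: the corollary is a direct specialization of Theorem~\ref{thm_main1}, and the only substantive input beyond unpacking definitions is the standard fact that $\mathcal K(a_1,\dots,a_n)$ is a single $\mathcal S_n$-conjugacy class. The ``work'' is purely bookkeeping to match the hypothesis of the theorem.
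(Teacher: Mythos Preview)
Your proposal is correct and is exactly the approach intended by the paper: the corollary is stated without proof because it is an immediate specialization of Theorem~\ref{thm_main1}, using that $\mathcal K(a_1,\dots,a_n)$ is a single conjugacy class (as noted in Section~2.3). Your handling of the degenerate cases $l=1$ and $S=\emptyset$ is a nice bit of tidiness that the paper omits.
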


\begin{example}\label{ex_1} $\Gamma(\mathcal S_n, S)$ is integral for the following:
\begin{itemize}
\item[\textnormal{(a)}] $S=\{ (i,j) \mid i\in [k], j\in [n]\setminus [k]\}$;
\item[\textnormal{(b)}] $S=\{ (i_1,i_2,i_3) \mid \{i_1,i_2,i_3\}\cap [k]\neq \emptyset\ \textnormal{and}\ \{i_1,i_2,i_3\}\cap ([n]\setminus [k])\neq \emptyset\}$;
\item[\textnormal{(c)}] $S=\{ (i_1,i_2)(i_3,i_4) \mid  \{i_1,i_2,i_3,i_4\}\cap [k]\neq\emptyset\ \textnormal{and}\  \{i_1,i_2,i_3\}\cap ([n]\setminus [k])\neq\emptyset\}$.
\end{itemize}
\end{example}

\section{Eigenvalues of the arrangement graphs}\label{Eigenvalues Ankr}

In this section we establish a connection between the eigenvalues of the arrangement graphs $A(n,k,r)$ and the eigenvalues of certain Cayley graphs.
More precisely, we prove that the adjacency matrix of $A(n,k,r)$ (modulo an integer factor) is a quotient matrix of an equitable partition of certain Cayley graphs.

An {\em equitable partition} of a graph $\Gamma$ is a partition $\Pi=(V_1,\ldots,V_m)$ of the vertex set
such that each vertex in $V_i$ has the same number $q_{ij}$ of neighbors in  $V_j$
for any $i,j$ (and possibly $i=j$). The {\em quotient matrix} of $\Pi$ is the $m\times m$ matrix $Q=(q_{ij})$.
It is well-known that every eigenvalue of the quotient matrix $Q$ is an eigenvalue of $\Gamma$ (see \cite[p. 24]{bh}).

Let $n\geq k\geq r\geq 1$ and
\begin{equation}
M(r)=\{ \sigma\in\mathcal S_n \mid \vert \M(\sigma)\cap [k]\vert=r\}.\notag
\end{equation}
Basically, $M(r)$ is the set of all elements in $\mathcal S_n$ that move exactly $r$ elements in $[k]$. Note that $M(r)$ is inverse closed. Hence $\Gamma(\mathcal S_n, M(r))$ is a Cayley graph.

Let $T=[n]\setminus [k]$ and $\mathcal S_n=\bigcup_{i=1}^{l} \mathcal S_n(T)\alpha_i$ be the disjoint union of all the right cosets of $\mathcal S_n(T)$, where $\alpha_i\in\mathcal S_n$ are the right cosets representative. Note that $l=n!/(n-k)!$.

\begin{lem}\label{lm_equi}\
\begin{itemize}
\item[\textnormal{(a)}] For $1\leq i\leq l$, $\beta\alpha_i$ is not adjacent to $\delta\alpha_i$ in $\Gamma(\mathcal S_n, M(r))$ for all $\beta, \delta\in\mathcal S_n(T)$.
\item[\textnormal{(b)}] For $1\leq i,j\leq l$, $i\neq j$ and $\beta_0, \delta_0\in\mathcal S_n(T)$,  if $\beta_0\alpha_i$ is adjacent to $\delta_0\alpha_j$ in $\Gamma(\mathcal S_n, M(r))$, then  $\beta_0\alpha_i$ is adjacent to $\delta\alpha_j$ in $\Gamma(\mathcal S_n, M(r))$ for all $\delta\in\mathcal S_n(T)$.
\end{itemize}

\end{lem}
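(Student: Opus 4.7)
The plan is to translate adjacency in $\Gamma(\mathcal S_n, M(r))$ into the algebraic condition $vu^{-1}\in M(r)$, and then exploit the coset decomposition with respect to $\mathcal S_n(T)$. The key observation driving both parts is that elements of $\mathcal S_n(T)$ are precisely those permutations that fix every point of $[k]$, while the statistic $|\M(\cdot)\cap[k]|$ defining $M(r)$ is invariant under left multiplication by $\mathcal S_n(T)$.

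For part (a), I would take two vertices $u=\beta\alpha_i$ and $v=\delta\alpha_i$ in the same coset $\mathcal S_n(T)\alpha_i$ and compute
\[
vu^{-1}=(\delta\alpha_i)(\beta\alpha_i)^{-1}=\delta\alpha_i\alpha_i^{-1}\beta^{-1}=\delta\beta^{-1}.
\]
Since $\mathcal S_n(T)$ is a subgroup of $\mathcal S_n$, this lies in $\mathcal S_n(T)$, hence fixes $[k]$ pointwise, so $|\M(\delta\beta^{-1})\cap[k]|=0$. As $r\geq 1$, we obtain $\delta\beta^{-1}\notin M(r)$, which is exactly the non-adjacency claim.

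For part (b), set $s_0=(\delta_0\alpha_j)(\beta_0\alpha_i)^{-1}$. By hypothesis $s_0\in M(r)$. For any $\delta\in\mathcal S_n(T)$, the connecting element between $\beta_0\alpha_i$ and $\delta\alpha_j$ is
\[
s=(\delta\alpha_j)(\beta_0\alpha_i)^{-1}=(\delta\delta_0^{-1})\,s_0=\gamma s_0,
\]
where $\gamma=\delta\delta_0^{-1}\in\mathcal S_n(T)$. Thus the claim reduces to showing that $\gamma s_0\in M(r)$ whenever $s_0\in M(r)$ and $\gamma\in\mathcal S_n(T)$. Using the paper's composition convention, for every $x\in[k]$ we have $(\gamma s_0)(x)=s_0(\gamma(x))=s_0(x)$, because $\gamma$ fixes every point of $[k]$. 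Therefore $\M(\gamma s_0)\cap[k]=\M(s_0)\cap[k]$, and the cardinality of this set is $r$; consequently $s\in M(r)$.

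The argument is essentially bookkeeping with the coset structure, so I do not foresee any significant obstacle beyond keeping the (slightly unusual) composition convention straight when pulling $\gamma$ out of $\gamma s_0$. The content of the lemma is really the two-line observation that left-multiplying by $\mathcal S_n(T)$ leaves the action on $[k]$ unchanged, and that elements of $\mathcal S_n(T)$ themselves contribute $0$ to $|\M(\cdot)\cap[k]|$; these facts yield parts (b) and (a), respectively.
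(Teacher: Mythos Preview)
Your proof is correct and follows essentially the same approach as the paper: both arguments reduce adjacency to membership in $M(r)$ and then use that multiplication by an element of $\mathcal S_n(T)$ does not change which points of $[k]$ are moved. The only cosmetic difference is that the paper writes $\beta_0\alpha_i=\gamma\delta_0\alpha_j$ and right-multiplies $\gamma$ by $\delta_0\delta^{-1}\in\mathcal S_n(T)$, whereas you compute $s=(\delta\alpha_j)(\beta_0\alpha_i)^{-1}$ and left-multiply $s_0$ by $\delta\delta_0^{-1}\in\mathcal S_n(T)$; these are the same observation viewed from opposite ends of the edge.
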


\begin{proof} (a) Suppose $\beta\alpha_i$ is adjacent to $\delta\alpha_i$ for some $\beta,\delta\in\mathcal S_n(T)$. Then there is a $\gamma\in M(r)$ such that $\beta\alpha_i=\gamma\delta\alpha_i$. This implies that $\gamma=\beta\delta^{-1}\in \mathcal S_n(T)$, a contradiction.

\vskip 0.5cm
\noindent
(b) Since $\beta_0\alpha_i$ is adjacent to $\delta_0\alpha_j$,  there is a $\gamma\in M(r)$ such that $\beta_0\alpha_i=\gamma\delta_0\alpha_j$. Let $\delta \in \mathcal S_n(T)$. Then $\beta_0\alpha_i=(\gamma\delta_0\delta^{-1})\delta\alpha_j$. Since $T\cap [k]=\emptyset$, $\gamma\delta_0\delta^{-1}\in M(r)$. Hence $\beta_0\alpha_i$ is adjacent to $\delta\alpha_j$.
\end{proof}

\begin{thm}\label{thm_main3} \
\begin{itemize}
\item[\textnormal{(a)}] $(\mathcal S_n(T)\alpha_1,\ldots,\mathcal S_n(T)\alpha_l)$ is an  equitable partition of $\Gamma(\mathcal S_n, M(r))$.
\item[\textnormal{(b)}] Let $Q$ be the  quotient matrix of $(\mathcal S_n(T)\alpha_1,\ldots,\mathcal S_n(T)\alpha_l)$ and $A_{n,k,r}$ be the adjacency matrix of the arrangement graph $A(n,k,r)$. Then
\begin{equation}
Q=(n-k)!A_{n,k,r}.\notag
\end{equation}
\item[\textnormal{(c)}] $A(n,k,r)$ is a $\frac{\vert M(r)\vert}{(n-k)!}$-regular graph.
\item[\textnormal{(d)}] If $\lambda$ is an eigenvalue of $A(n,k,r)$, then $(n-k)!\lambda$  is an eigenvalue of $\Gamma(\mathcal S_n, M(r))$.
\end{itemize}
\end{thm}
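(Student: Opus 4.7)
The plan is to identify each right coset $\mathcal S_n(T)\alpha_i$ with the $k$-permutation $(\alpha_i(1),\ldots,\alpha_i(k))$ and thereby push the Cayley graph structure down to $V(n,k)$. Since every $\beta \in \mathcal S_n(T)$ fixes $[k]$ pointwise, the tuple $(\alpha_i(1),\ldots,\alpha_i(k))$ is constant on each coset, and a short check (or counting) shows that this gives a bijection between the $l=n!/(n-k)!$ cosets and the vertex set $V(n,k)$.

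The core is parts (a) and (b). The crucial ingredient I would record first is the two-sided closure
\[
\mathcal S_n(T)\, M(r) \;=\; M(r)\, \mathcal S_n(T) \;=\; M(r),
\]
which holds because composing any $\gamma \in M(r)$ on either side with an element fixing $[k]$ pointwise leaves $\M(\gamma) \cap [k]$ unchanged. Lemma \ref{lm_equi}(a) then gives $q_{ii}=0$, while Lemma \ref{lm_equi}(b) shows that a given $\beta_0\alpha_i$ has either $0$ or $(n-k)!$ neighbors in $\mathcal S_n(T)\alpha_j$. To upgrade this to an equitable partition I need the count to be independent of $\beta_0$: if $\beta_0\alpha_i = \gamma\delta_0\alpha_j$ with $\gamma \in M(r)$, then for any $\beta_1 \in \mathcal S_n(T)$ we have $\beta_1\alpha_i = (\beta_1\beta_0^{-1}\gamma)\,\delta_0\alpha_j$, and $\beta_1\beta_0^{-1}\gamma \in \mathcal S_n(T)\,M(r) = M(r)$, so $\beta_1\alpha_i$ is also adjacent to $\delta_0\alpha_j$. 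For (b), since $q_{ij} \in \{0,(n-k)!\}$, it suffices to determine when $q_{ij}=(n-k)!$, equivalently when $\alpha_i \sim \alpha_j$ in the Cayley graph, equivalently when $\alpha_j\alpha_i^{-1} \in M(r)$. A direct computation with the composition convention gives $|\M(\alpha_j\alpha_i^{-1})\cap[k]| = |\{s \in [k] : \alpha_j(s) \neq \alpha_i(s)\}|$, so $\alpha_j\alpha_i^{-1} \in M(r)$ iff the two $k$-permutations differ in exactly $r$ positions, which is exactly the edge condition for $A(n,k,r)$. Hence $Q = (n-k)!\, A_{n,k,r}$.

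Parts (c) and (d) are then short corollaries. The Cayley graph $\Gamma(\mathcal S_n, M(r))$ is $|M(r)|$-regular, so each row of $Q$ sums to $|M(r)|$, and consequently each row of $A_{n,k,r}$ sums to $|M(r)|/(n-k)!$, proving (c). Part (d) follows from the standard fact cited earlier: every eigenvalue of the quotient matrix of an equitable partition is an eigenvalue of the graph, so every eigenvalue of $Q = (n-k)!\,A_{n,k,r}$ is an eigenvalue of $\Gamma(\mathcal S_n, M(r))$. I expect the main obstacle to lie in (a): Lemma \ref{lm_equi}(b) almost gives the equitable partition on its own, but upgrading it to an actual vertex-independent count requires the two-sided closure of $M(r)$ under multiplication by $\mathcal S_n(T)$, and the left/right bookkeeping is the place where one must be careful.
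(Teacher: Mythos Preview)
Your proposal is correct and follows essentially the same route as the paper: identify cosets with $k$-permutations, use Lemma~\ref{lm_equi} to get $q_{ij}\in\{0,(n-k)!\}$, match the nonzero case with the edge condition of $A(n,k,r)$, and then read off (c) and (d). The only expository difference is that you justify vertex-independence in part (a) via the left closure $\mathcal S_n(T)\,M(r)=M(r)$, whereas the paper leaves this implicit (it follows immediately from Lemma~\ref{lm_equi}(b) together with symmetry of adjacency, since one edge between two cosets forces a complete bipartite join).
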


\begin{proof} By Lemma \ref{lm_equi}, a vertex in $\mathcal S_n(T)\alpha_i$ is not adjacent to any vertex in $\mathcal S_n(T)\alpha_i$, furthermore, if a vertex in $\mathcal S_n(T)\alpha_i$ is adjacent to a vertex in $\mathcal S_n(T)\alpha_j$ and $i\neq j$, then it is adjacent to all the vertices in $\mathcal S_n(T)\alpha_j$.
This proves part (a) and that  every entry in $Q$ is either 0 or $(n-k)!$.

Note that we can represent the right coset $\mathcal S_n(T)\alpha_i$ by the vector $(\alpha_i(1),\alpha_i(2),\dots, \alpha_i(k))$. This is because if $(\alpha_i(1),\alpha_i(2),\dots, \alpha_i(k))=(\alpha_j(1),\alpha_j(2),\dots, \alpha_j(k))$ for $i\neq j$, then $\alpha_i\alpha_j^{-1}\in \mathcal S_n(T)$, which is impossible as $\mathcal S_n(T)\alpha_i$ and $\mathcal S_n(T)\alpha_j$ are distinct right cosets. Let $Q=[q_{ij}]$. Then $q_{ij}=(n-k)!$ exactly when the two vectors $(\alpha_i(1),\alpha_i(2),\dots, \alpha_i(k))$ and $(\alpha_j(1),\alpha_j(2),\dots, \alpha_j(k))$ differ in exactly $r$ positions. This proves part (b).

The sum $\sum_{j=1}^l q_{ij}$ is equal to the degree of a vertex in $\mathcal S_n(T)\alpha_i$ in the graph $\Gamma(\mathcal S_n, M(r))$. So, $\sum_{j=1}^l q_{ij}=\vert M(r)\vert$. On the other hand, $\sum_{j=1}^l q_{ij}$ is equal to $(n-k)!$ times the degree of the vertex $(\alpha_i(1),\alpha_i(2),\dots, \alpha_i(k))$ in the arrangement graph $A(n,k,r)$. Hence  the degree of the vertex $(\alpha_i(1),\alpha_i(2),\dots, \alpha_i(k))$ in $A(n,k,r)$ is $\frac{\vert M(r)\vert}{(n-k)!}$ and part (c) follows.

Part (d) follows by noting that $(n-k)!\lambda$ is an eigenvalue of $Q$ and every eigenvalue of $Q$ is an eigenvalue of $\Gamma(\mathcal S_n, M(r))$.
\end{proof}

\section{Proof of Theorem \ref{thm_main4}}

In this section we give a proof for Theorem~\ref{thm_main4} which is based on the method given in Section~\ref{Eigenvalues Ankr}.

Let $A(n,k)=A(n,k,1)$. Note that $A(k,k)$  is the empty graph with $k$ vertices and $A(n,1)$ is the complete graph with $n$ vertices. So, it is sufficient to prove that the eigenvalues of $A(n,k)$ are integers for $n>k\geq 2$.

Let  $S=\{ (i,j) \mid i\in [k], j\in [n]\setminus [k]\}$.
Let $T=[n]\setminus [k]$ and $\mathcal S_n=\bigcup_{i=1}^{l} \mathcal S_n(T)\alpha_i$ be the disjoint union of all the right cosets of $\mathcal S_n(T)$. Note that $l=n!/(n-k)!$.

\begin{lem}\label{lm_equi2}\
\begin{itemize}
\item[\textnormal{(a)}] For $1\leq i\leq l$, $\beta\alpha_i$ is not adjacent to $\delta\alpha_i$ in $\Gamma(\mathcal S_n,S)$ for all $\beta, \delta\in\mathcal S_n(T)$.
\item[\textnormal{(b)}] For $1\leq i,j\leq l$, $i\neq j$ and $\beta_0, \delta_0\in\mathcal S_n(T)$,  if $\beta_0\alpha_i$ is adjacent to $\delta_0\alpha_j$ in $\Gamma(\mathcal S_n, S)$, then for any $\beta\in\mathcal S_n(T)$, there is a unique $\delta\in\mathcal S_n(T)$ such that $\beta\alpha_i$ is adjacent to $\delta\alpha_j$ in $\Gamma(\mathcal S_n, S)$.
\end{itemize}

\end{lem}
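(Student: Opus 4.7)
The plan is to follow the template of Lemma \ref{lm_equi}, but with a sharper analysis tailored to the fact that $S$ consists of just the transpositions $(i,j)$ with $i\in[k]$ and $j\in T$, rather than a full conjugacy-class intersection. The novel feature in part (b) is the \emph{uniqueness} of $\delta$, which does not hold in Lemma \ref{lm_equi}(b), and this is what will demand the most care.

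Part (a) is immediate and mirrors the proof of Lemma \ref{lm_equi}(a). If $\beta\alpha_i=s\delta\alpha_i$ for some $s\in S$ and $\beta,\delta\in\mathcal S_n(T)$, then $s=\beta\delta^{-1}$ lies in $\mathcal S_n(T)$. But every element of $S$ is a transposition $(p,q)$ with $p\in[k]$, so $s$ moves $p\in[k]$, contradicting $s\in\mathcal S_n(T)$.

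For part (b), the hypothesis $\beta_0\alpha_i=s_0\delta_0\alpha_j$ with some $s_0=(p,q)\in S$, $p\in[k]$, $q\in T$, rewrites as $\alpha_i\alpha_j^{-1}=\beta_0^{-1}s_0\delta_0$. Given $\beta\in\mathcal S_n(T)$, finding $\delta\in\mathcal S_n(T)$ and $s\in S$ with $\beta\alpha_i=s\delta\alpha_j$ is equivalent, after substitution, to finding $v\in\mathcal S_n(T)$ such that $us_0v\in S$, where $u:=\beta\beta_0^{-1}\in\mathcal S_n(T)$ is determined by $\beta$ and $v:=\delta_0\delta^{-1}$; once $v$ is identified, one recovers $\delta=v^{-1}\delta_0$.

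Since $u,v\in\mathcal S_n(T)$ both fix $[k]$, unwinding $us_0v(i)=u(s_0(v(i)))$ shows that $us_0v$ fixes $[k]\setminus\{p\}$, sends $p\mapsto u(q)\in T$, sends $v^{-1}(q)\mapsto p$, and otherwise maps $i\in T$ to $u(v(i))\in T$. For $us_0v$ to belong to $S$ it must therefore equal the transposition $(p,u(q))$, and this forces the two conditions $v(u(q))=q$ and $v(i)=u^{-1}(i)$ for every $i\in T\setminus\{u(q)\}$. These prescriptions pin down $v$ uniquely on $T$, and a short image-counting check (the values $u^{-1}(T\setminus\{u(q)\})=T\setminus\{q\}$ together with the extra value $q$ cover $T$ bijectively) confirms that $v$ is a genuine element of $\mathcal S_n(T)$. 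The main obstacle, and the heart of the contrast with Lemma \ref{lm_equi}(b), is establishing uniqueness; this reflects the sparseness of $S$: once $\beta\alpha_i$ and the target coset $\mathcal S_n(T)\alpha_j$ are fixed, there is at most one transposition in $S$ completing the adjacency, so the neighbor $\delta\alpha_j$ is completely pinned down.
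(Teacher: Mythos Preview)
Your proof is correct, and in part (b) it takes a somewhat different route from the paper's. The paper treats existence and uniqueness separately: for existence it simply conjugates, writing
\[
\beta\alpha_i=(\beta\beta_0^{-1})(a\ b)(\beta\beta_0^{-1})^{-1}\,(\beta\beta_0^{-1}\delta_0)\alpha_j
\]
and observing via \eqref{eq_conjugate} that $(\beta\beta_0^{-1})(a\ b)(\beta\beta_0^{-1})^{-1}=(a\ c)\in S$ because $\beta\beta_0^{-1}\in\mathcal S_n(T)$ fixes $a\in[k]$; for uniqueness it notes that two adjacencies $\beta\alpha_i=(a\ c)\delta\alpha_j=(a'\ b')\delta'\alpha_j$ force $(a\ c)(a'\ b')=\delta\delta'^{-1}\in\mathcal S_n(T)$, which with $a,a'\in[k]$ is only possible when the two transpositions coincide. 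You instead reduce both questions at once to solving $us_0v\in S$ for $v\in\mathcal S_n(T)$ and read off the unique solution by tracking images; in fact your conditions give $v=u^{-1}$, which recovers the paper's $\delta=\beta\beta_0^{-1}\delta_0$. The paper's argument is shorter and leans on the conjugation formula, while yours is more computational but handles existence and uniqueness in one stroke without a separate case split. One minor caveat: your unwinding $us_0v(i)=u(s_0(v(i)))$ uses the right-to-left composition convention, whereas the paper's worked example $(1\ 2\ 3)(2\ 3)=(1\ 3)$ indicates left-to-right; this does not affect the conclusion (the unique $v$ is $u^{-1}$ under either reading), but you should harmonize the convention with the rest of the paper.
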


\begin{proof} (a) Suppose $\beta\alpha_i$ is adjacent to $\delta\alpha_i$ for some $\beta,\delta\in\mathcal S_n(T)$. Then $\beta\alpha_i=(a\ \ b)\delta\alpha_i$ with $a\in [k]$ and $b\in [n]\setminus [k]$. This implies that $(a\ \ b)=\beta\delta^{-1}\in \mathcal S_n(T)$, a contradiction.

\vskip 0.5cm
\noindent
(b) Since $\beta_0\alpha_i$ is adjacent to $\delta_0\alpha_j$,  $\beta_0\alpha_i=(a\ \ b)\delta_0\alpha_j$ with $a\in [k]$ and $b\in [n]\setminus [k]$. Let $\beta \in \mathcal S_n(T)$. Then $\beta\alpha_i=\beta(\beta_0^{-1}\beta_0)\alpha_i=(\beta\beta_0^{-1})(a\ \ b)\delta_0\alpha_j=((\beta\beta_0^{-1})(a\ \ b)(\beta\beta_0^{-1})^{-1})(\beta\beta_0^{-1}\delta_0)\alpha_j$. Since $T\cap [k]=\emptyset$, $(\beta\beta_0^{-1})(a\ \ b)(\beta\beta_0^{-1})^{-1}=(a \ \ c)\in S$ where $c=(\beta\beta_0^{-1})^{-1}(b)\in [n]\setminus [k]$
(see \eqref{eq_conjugate}). Hence $\beta\alpha_i$ is adjacent to $\delta\alpha_j$ where $\delta=\beta\beta_0^{-1}\delta_0$.

Suppose $\beta\alpha_i$ is adjacent to $\delta'\alpha_j$. Then $\beta\alpha_i=(a'\ \ b')\delta'\alpha_j$ with $a'\in [k]$ and $b'\in [n]\setminus [k]$. On the other hand, $\beta\alpha_i=(a\ \ c)\delta\alpha_j$. Therefore $(a\ \ c)(a'\ \ b')=\delta{\delta'}^{-1}\in\mathcal S_n(T)$ and this is only possible when $a=a'$ and $c=b'$. Hence $\delta=\delta'$. This completes the proof of part (b).
\end{proof}

By Lemma \ref{lm_equi2}, $(\mathcal S_n(T)\alpha_1,\ldots,\mathcal S_n(T)\alpha_l)$ is an  equitable partition of $\Gamma(\mathcal S_n, S)$. Let $Q=[q_{ij}]$ be the  quotient matrix of $(\mathcal S_n(T)\alpha_1,\ldots,\mathcal S_n(T)\alpha_l)$. Then  every entry in $Q$ is either 0 or $1$. If we represent the right coset $\mathcal S_n(T)\alpha_i$ by the vector $(\alpha_i(1),\alpha_i(2),\dots, \alpha_i(k))$, then  $q_{ij}=1$ if and only if the two vectors $(\alpha_i(1),\alpha_i(2),\dots, \alpha_i(k))$ and $(\alpha_j(1),\alpha_j(2),\dots, \alpha_j(k))$ differ in exactly $1$ position. Thus,
\begin{equation}
Q=A_{n,k},\notag
\end{equation}
where $A_{n,k}$ is the adjacency matrix of the arrangement graph $A(n,k)$. This implies that the eigenvalues of $A(n,k)$ are eigenvalues of $\Gamma(\mathcal S_n, S)$.
Now $\Gamma(\mathcal S_n, S)$ is integral (see Example \ref{ex_1}). Hence all the eigenvalues of $A(n,k)$ are integers and Theorem~\ref{thm_main4} follows.

\section*{Acknowledgments} 
We would like to thank the anonymous referee for the comments that helped us make several improvements to this
paper. 

The first and the third authors are supported by the Advanced Fundamental Research Cluster, University of Malaya
(UMRG RG238/12AFR). The second author is supported by a grant from IPM (No. 92050114).

\end{document}